\newtheorem{theorem}[equation]{Theorem}
\newtheorem{lemma}[equation]{Lemma}
\newtheorem{corollary}[equation]{Corollary}
\newtheorem{definition}[equation]{Definition}
\theoremstyle{remark}
\newtheorem{remark}[equation]{Remark}
\newtheorem{notation}[equation]{Notation}
\newtheorem{assumption}[equation]{Assumption}
\numberwithin{equation}{section}
\newcommand{\R}{\mathbb{R}}
\newcommand{\N}{\mathbb{N}}
\newcommand{\cat}{\mathbb{K}}
\newcommand{\Lcal}{\mathcal{L}}
\newcommand{\Sph}{\mathbb{S}}
\newcommand{\Wcal}{\mathcal{W}}
\newcommand{\Psibold}{{\boldsymbol{\Psi}}}
\newcommand{\arccosh}{\operatorname{arccosh}}
\newcommand{\dbold}{{\mathbf{d}}}
\newcommand{\phicat}{\varphi_{\mathrm{cat}}}
\newcommand{\Phip}{\Phi'}
\newcommand{\psicut}{\psi_{\mathrm{cut}}}
\newcommand{\phigl}{\varphi_{\mathrm{gl}}}
\newcommand{\varphiunder}{{\underline{\varphi}}}
\newcommand{\varphiover}{\overline{\varphi}}
\newcommand{\zz}{\ensuremath{\mathrm{z}}}
\newcommand{\rr}{\ensuremath{\mathrm{r}}}
\newcommand{\sss}{{{\ensuremath{\mathrm{s}}}}}
\newcommand{\sunder}{\underline{\sss}}
\newcommand{\Gcat}{\cat_{p, \tau}}
\newcommand{\Gld}{\Gamma^{\phigl}_{\Omega}}
\newcommand{\Siginit}{\Sigma_{m, \tau}}
\newcommand{\Vol}{\operatorname{Vol}}
\newcommand{\Area}{\operatorname{Area}}
\begin{document}

\title[On the Canham Problem]{On the Canham Problem: Bending Energy\\Minimizers for any Genus and Isoperimetric Ratio}
\author[R.~Kusner]{Robert~Kusner}
\author[P.~McGrath]{Peter~McGrath}
\date{}
\address{Department of Mathematics, University of Massachusetts,
Amherst, MA, 01003} \email{profkusner@gmail.com, kusner@math.umass.edu}
\address{Department of Mathematics, North Carolina State University, Raleigh NC 27695} 
\email{pjmcgrat@ncsu.edu}
\maketitle

\begin{abstract}
Building on work of Mondino-Scharrer, we show that among closed, smoothly embedded surfaces in $\R^3$ of genus $g$ and given isoperimetric ratio $v$, there exists one with minimum bending energy $\Wcal$.  We do this by gluing $g+1$ small catenoidal bridges to the bigraph of a singular solution for the linearized Willmore equation $\Delta (\Delta +2)\varphi=0$ on the $(g+1)$-punctured sphere $\Sph^2$ to construct a comparison surface of genus $g$ with arbitrarily small isoperimetric ratio $v\in (0, 1)$ and $\Wcal < 8\pi$. 
\end{abstract}

\section{Introduction}
\label{S:intro}
Motivated by the problem of explaining the shapes of biophysical membranes like red blood cells \cite{Canham} or phospholipid vesicles \cite{Helfrich} variationally, 
significant progress has been made \cite{Schygulla, KellerMondino, MondinoScharrer, Scharrer}, using previous work of \cite{LiYau, Kusner89, Simon, Kusner96, Bauer}, on the following basic mathematical question:
\begin{center}
\emph{Among closed, smoothly embedded surfaces in Euclidean three-space of a given genus\\ and prescribed isoperimetric ratio, is there a surface with minimum bending energy?} 
\end{center}

\noindent
For a smoothly embedded surface $S\subset\R^3$, its isoperimetric ratio $v(S)$ and its bending energy $\Wcal(S)$ are defined by
\begin{align}
\label{EW}
v(S):=36\pi\frac{\Vol(S)^2}{\Area(S)^3}, \quad \quad
\Wcal(S):= \frac{1}{4}\int_{S} H^2 d \sigma,
\end{align}
where $H$ and $d\sigma$ are the mean curvature and area element of $S$, and where $\Vol(S)$ is the volume of the domain in $\R^3$ that $S$ bounds. 
In this paper, we answer the above question affirmatively:

\begin{theorem}
\label{TMain}
Let $g \in \{0, 1, 2, \dots \}$ and $v \in (0, 1)$.  Among all closed, smoothly embedded surfaces $S \subset \R^3$ of genus $g$ and $v(S) = v$, there exists $\underline{S}_{g,v}$ with minimum bending energy $\Wcal(\underline{S}_{g,v}) < 8\pi$.
\end{theorem}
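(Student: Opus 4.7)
The plan is to combine Mondino--Scharrer's existence theorem for constrained Willmore minimizers with an explicit gluing construction producing comparison surfaces of genus $g$, prescribed isoperimetric ratio $v$, and Willmore energy strictly below $8\pi$. The threshold $8\pi$ is crucial: by Li--Yau, any closed immersed surface with $\Wcal < 8\pi$ is embedded, and Mondino--Scharrer's compactness/regularity theory yields a smooth embedded minimizer in the class of genus-$g$ surfaces with $v(S) = v$ provided the constrained infimum is strictly less than $8\pi$. Thus Theorem \ref{TMain} reduces to exhibiting, for each $g \in \{0,1,2,\dots\}$ and each $v \in (0,1)$, an embedded surface $S_{g,v}$ of genus $g$ with $v(S_{g,v}) = v$ and $\Wcal(S_{g,v}) < 8\pi$. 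Previous constructions handle $v$ bounded away from $0$; the remaining regime of small $v$ is the new content.

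The construction is a desingularized bigraph. I would fix $g+1$ distinct points $p_0, \dots, p_g \in \Sph^2$ and a function $\varphi$ on $\Sph^2 \setminus \{p_0, \dots, p_g\}$ solving the linearized Willmore equation $\Delta(\Delta+2)\varphi = 0$ (the Jacobi operator of $\Wcal$ at the round sphere), with a logarithmic singularity at each $p_i$, noting that $\log r$ lies in the kernel of $\Delta$ and hence of $\Delta(\Delta+2)$. For a small parameter $\tau > 0$ I would form the bigraph $\{x \pm \tau\varphi(x)\nu(x) : x \in \Sph^2\}$, truncated near each puncture. Near $p_i$ the two sheets are nearly parallel with separation $\sim \tau|\log r|$; I insert a small catenoidal neck of waist scale $\sim \tau$ whose asymptotic logarithm matches $\tau\varphi$, and smooth in a thin matching annulus. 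The outcome is a closed embedded surface $S_{g,v}$ of genus $g$, realized as two topological spheres joined by $g+1$ necks.

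The analytic core is an expansion of $\Wcal$ in $\tau$. The bigraph contributes $2 \cdot 4\pi = 8\pi$ at leading order, and the catenoidal necks contribute nothing at leading order since catenoids are minimal. Imposing $\Delta(\Delta+2)\varphi = 0$ kills the first variation of $\Wcal$ about $\Sph^2$, so the $O(\tau)$ correction vanishes, and the matched-asymptotic analysis is arranged so that the $O(\tau^2)$ contribution is strictly negative, giving $\Wcal(S_{g,v}) < 8\pi$ for all sufficiently small $\tau$. This matched asymptotics between the singular bigraph profile and the catenoidal ends is the main obstacle: one must choose a specific singular $\varphi$ (with correct weights and flux parameters at each puncture) and show that the neck-region and neck--neck interaction terms do not overwhelm the negative quadratic term coming from the bigraph; the precise cancellation responsible for the strict inequality is what has to be verified by hand.

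Finally, because the bigraph sheets are separated by normal distance $O(\tau)$ over a region of area $\sim 8\pi$, the enclosed volume is $O(\tau)$ while $\Area(S_{g,v}) = 8\pi + O(\tau^2)$, so that $v(S_{g,v}) = 36\pi \Vol(S_{g,v})^2/\Area(S_{g,v})^3 = O(\tau^2)$ can be driven to $0$. Varying $\tau$ continuously, the isoperimetric ratio sweeps an interval $(0, v_0)$ for some $v_0 > 0$; combined with prior constructions covering $v$ away from $0$ and an intermediate-value argument in the parameter, one realizes every target $v \in (0,1)$. Together with the reduction in the first paragraph, this yields the existence and sub-$8\pi$ bound asserted in Theorem \ref{TMain}.
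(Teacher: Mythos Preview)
Your overall strategy matches the paper's: reduce via Mondino--Scharrer to constructing genus-$g$ comparison surfaces with $\Wcal < 8\pi$, and build these by gluing $g+1$ catenoidal bridges to the bigraph of a singular solution of the linearized Willmore equation on the punctured $\Sph^2$. Two points need correction.

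First, your plan for covering all $v \in (0,1)$ has a gap. The appeal to ``prior constructions covering $v$ away from $0$'' is not justified for general genus $g \geq 1$; producing such surfaces is precisely the new content. The paper instead exploits conformal invariance of $\Wcal$: once a single surface $S$ of genus $g$ with arbitrarily small $v(S)$ and $\Wcal(S) < 8\pi$ is in hand, apply a one-parameter family of M\"obius transformations of $\R^3$ dilating out from a point $p_+ \in S$. These preserve $\Wcal$ while the image surfaces converge smoothly (away from one point) to a round sphere, so their isoperimetric ratio tends to $1$. By continuity every target $v \in [v(S),1)$ is hit with $\Wcal < 8\pi$. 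This self-contained trick replaces your unspecified prior constructions.

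Second, the mechanism forcing $\Wcal < 8\pi$ is not quite as you describe. The $O(\tau)$ term vanishes for \emph{any} normal perturbation, since $\Sph^2$ is already Willmore; and the leading correction is of order $\tau^2|\log\tau|$, not $\tau^2$. The paper takes $\varphi = c_1 + \tau\Phi$ where $\Phi$ is an LD solution, i.e.\ $(\Delta+2)\Phi = 0$ on $\Sph^2 \setminus L$ with logarithmic singularities at $L$, and the constant $c_1 = \tau\log(2/\tau) - \tau c_0$ is tuned so that $\varphi$ matches the catenoid profile $\phicat$ to high order near $L$. Because $\Lcal\Phi = 0$, the bulk second-variation integrand $(\Delta\phigl)(\Lcal\phigl)$ collapses via the divergence theorem to boundary flux terms of size $c_1\tau \sim \tau^2|\log\tau|$; combining this with the bridge estimate $\Wcal(\Gcat) \leq 2|D_p(\tau^\alpha)| + \tfrac{8\pi}{3}\tau^2|\log\tau|$ yields $\Wcal(\Sigma_{m,\tau}) \leq 8\pi - m\pi\tau^2|\log\tau|$. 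So the strict inequality comes from the boundary flux of the singular $\Phi$ through the excised discs, not from any negativity of the second variation on the smooth bulk.
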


\noindent
Our main contribution is to construct, for each $g$ and prescribed $v\in (0, 1)$, a smooth embedded surface ${S}_{g,v}\subset \R^3$ of genus $g$, isoperimetric ratio $v$, and bending energy $\Wcal({S}_{g,v})<8\pi$. From this, the conclusion of Theorem \ref{TMain} follows by recent work of Mondino-Scharrer \cite[Theorem 1.2]{MondinoScharrer}.  This solves the \emph{existence} portion of the \emph{Canham Problem} \cite{KMS2016MSRI} to characterize the family of $\Wcal$-minimizing surfaces $\{\underline{S}_{g,v}\}$ of genus $g$ with fixed isoperimetric ratio $v \in (0, 1)$. 

\subsection*{Outline of our method} 
\phantom{ab}
\nopagebreak

A naive construction for a smooth surface $S\subset \R^3$ of genus $g$ with $\Wcal(S)$ approximately $8\pi$ and $v(S)$ small would be to join two nearby parallel copies of the unit sphere $\Sph^2 \subset \R^3$, each with $g+1$ disks removed, by gluing in $g+1$ small catenoidal bridges to the boundaries of the excised disks. 
Unfortunately, such a surface has $\Wcal(S) >8\pi$: each catenoidal bridge contributes strictly more bending energy than the energy of the corresponding pair of excised disks (see Lemma \ref{Lbridge}), while each sphere contributes exactly $4\pi$ minus the energy of the excised disks.  Therefore, to ensure the bending energy is less than $8\pi$, a more subtle construction is required.

Our method stems  
in part from the Kapouleas construction \cite{Kap:sphere} 
of minimal surfaces in $\Sph^3$ by doubling the equatorial two-sphere $\Sph^2$.  
He introduced an approach called \emph{Linearized Doubling} (LD), where 
the minimal  
surface 
is a small perturbation of a smooth \emph{initial surface}, constructed by gluing a collection of catenoidal bridges, whose centers are located on a finite set $L \subset \Sph^2$ and whose axes are orthogonal to $\Sph^2$, to the bigraph of an \emph{LD solution}---a solution $\varphi$ of the linearized minimal surface equation $\Lcal \varphi : = (\Delta +2)\varphi = 0$ on $\Sph^2 \setminus L$ with logarithmic singularities at $L$.  In order for a family of LD solutions to give rise to a minimal doubling, certain matching conditions related to the alignment of the LD solutions and the bridges must be satisfied.  While the set of LD solutions is a linear space, these matching conditions are nonlinear and substantial work has gone into determining appropriately matched LD solutions and to further developing doubling constructions via linearized doubling \cite{Kap:sphere, KapMcG, KapMcG2}. 

The surfaces we construct here are defined in similar fashion to the initial surfaces described above.  Importantly, however, the function $\varphi$ whose bigraph defines the surface away from the bridges is now chosen to be a singular solution on $\Sph^2 \setminus L$ of the \emph{linearized Willmore equation} 
$
(\Lcal-2)\Lcal \varphi= \Delta ( \Lcal \varphi) =0.
$
More specifically, we can choose $\varphi = c_1+\tau \Phi$ where $\Phi$ is such an LD solution, $\tau>0$ is sufficiently small but otherwise arbitrary, and $c_1$ is a constant.  The ability to prescribe $c_1$ allows us to rather easily match the asymptotics of $\varphi$ to those of a collection of catenoidal bridges, each with waist radius $\tau$, provided the set $L$ of centers is carefully chosen. 

For each $g\geq 0$ and all $\tau>0$ sufficiently small (depending on $g$), we then define in \ref{dphi} 
a smooth genus $g$ surface $\Sigma_{g+1, \tau} \subset \Sph^3$ whose bending energy can be shown to satisfy $\Wcal(\Sigma_{g+1,\tau})< 8\pi$.  As $\tau \searrow 0$, stereographic projection of $\Sigma_{g+1,\tau}$ converges in the sense of varifolds to the unit sphere $\Sph^2 \subset \R^3$ with multiplicity two.  In particular, since stereographic projection is conformal and $\Wcal$ is conformally-invariant, the stereographic image $S_{g,v(\tau)}$ of $\Sigma_{g+1, \tau}$ in $\R^3$ has bending energy less than $8\pi$ and isoperimetric ratio $v=v(\tau)$ near zero.  Applying a family of M\"obius transformations to $S_{g,v(\tau)}$
we then obtain---for any $v\in[v(\tau),1)$---a genus $g$ surface $S_{g, v}$ with $\Wcal(S_{g, v}) <8\pi$ and isoperimetric ratio $v(S_{g, v}) = v$.  
From the construction of these comparison surfaces and the recent work of Mondino-Scharrer \cite[Theorem 1.2]{MondinoScharrer}, our Theorem \ref{TMain} follows.

\subsection*{Outline of our paper}
\phantom{ab}
\nopagebreak

After fixing  
notation and conventions in Section \ref{Snotation}, we estimate in Section \ref{Sgraph} the bending energy of the normal exponential graph $\Gamma^u_\Omega \subset \Sph^3$ of a function $u$ defined on a domain $\Omega \subset \Sph^2$.  In Section \ref{Sbridge}, we define for each $p \in \Sph^2$ and all small enough $\tau>0$ a catenoidal bridge $\Gcat \subset \Sph^3$ centered at $p$ and of size $\tau$, and we  
estimate the bending energy of a $\Gcat$. 
 
In Section \ref{SLD}, after recalling certain facts from \cite{Kap:sphere} regarding LD solutions, we 
construct in \ref{dphi} for each $m \in \N$ and all sufficiently small $\tau>0$ a smoothly embedded genus $m-1$ surface $\Siginit \subset \Sph^3$.  Finally, we prove Theorem \ref{TMain} by reducing the problem to showing $\Wcal(\Sigma_{m, \tau})<8\pi$, which we do by combining earlier estimates on the bending  
energies of the corresponding catenoidal and graphical regions.

\subsection*{Acknowledgements}
$\phantom{ab}$
\nopagebreak

We thank Nikos Kapouleas, whose constructions of minimal surfaces via linearized doubling inspired this approach to solving the Canham existence problem. 

\section{Notation and conventions} 
\label{Snotation}

Throughout the paper, 
let $\Sph^3 \subset \R^4$ be the unit three-sphere and $\Sph^2 \subset \Sph^3$ 
the equatorial two-sphere defined by $\Sph^2 = \Sph^3 \cap \{x^4 = 0\}$, where $(x^1, x^2, x^3, x^4)$ are the standard coordinates on $\R^4$. 

We define the Willmore bending energy $\Wcal(\Sigma)$ of a (not necessarily closed) surface $\Sigma \subset \Sph^3$ by
\begin{align*}
\Wcal(\Sigma) = |\Sigma| + \frac{1}{4} \int_\Sigma H^2 d\sigma,
\end{align*}
where $d\sigma$ is the area form on $\Sigma$, and where $H$ is its mean curvature in $\Sph^3$.  

\begin{remark}
\label{Rconf}
Note that if $S\subset \R^3$ is a stereographic image of $\Sigma \subset \Sph^3$, then $\Wcal(\Sigma) = \Wcal(S)$, where $\Wcal(S)$ was defined in \eqref{EW}.
\end{remark}

\begin{notation}
For any $X \subset \Sph^2$, we write $\dbold_X$ for the distance from $X$, and define the $\delta$-neighborhood of $X$ by
\[
D_X(\delta) : = \{ p \in \Sph^2 : \dbold_X(p) < \delta\}.
\]
If $X$ is finite we just list its points;   for example, $\dbold_q(p)$ is the geodesic distance between $p$ and $q$ and $D_q(\delta)$ is the geodesic disc in $\Sph^2$ with center $q$ and radius $\delta$. 
\end{notation}

\begin{notation}
We denote by $\Lcal = \Delta +2$ and $(\Lcal-2)\Lcal = \Delta\Lcal = \Delta(\Delta +2)$ the area-Jacobi and $\Wcal$-Jacobi operators of $\Sph^2 \subset \Sph^3$, where $\Delta$ denotes the Laplacian on $\Sph^2$. 
\end{notation}

\subsection*{Cutoff functions}
\phantom{ab}
\nopagebreak

The following notation regarding cutoff functions is standard in gluing constructions  \cite{Kap:sphere}.

\begin{definition}
\label{DPsi} 
We fix a smooth function $\Psi:\R\to[0,1]$ with the following properties:
\begin{enumerate}[label=\emph{(\roman*)}]
\item $\Psi$ is nondecreasing.
\item $\Psi\equiv1$ on $[1,\infty)$ and $\Psi\equiv0$ on $(-\infty,-1]$.
\item $\Psi-\frac12$ is an odd function.
\end{enumerate}
Given $a,b\in \R$ with $a\ne b$,
we define smooth functions
$\psicut[a,b]:\R\to[0,1]$
by
\begin{equation*}
\psicut[a,b]:=\Psi\circ L_{a,b},
\end{equation*}
where $L_{a,b}:\R\to\R$ is the linear function defined by the requirements $L(a)=-3$ and $L(b)=3$.
\end{definition}

Note that $\psicut[a,b]$ has the following properties:
\begin{enumerate}[label={(\roman*)}]
\item $\psicut[a,b]$ is weakly monotone.

\item 
$\psicut[a,b]=1$ on a neighborhood of $b$ and 
$\psicut[a,b]=0$ on a neighborhood of $a$.

\item $\psicut[a,b]+\psicut[b,a]=1$ on $\R$.
\end{enumerate}

Suppose now we have functions $f_0,f_1$, and $\rho$ defined on some domain $\Omega$.
We define a new function 
\begin{equation}
\label{EPsibold}
\Psibold\left [a,b; \rho  \right](f_0,f_1):=
\psicut[a,b ]\circ \rho \, f_1
+
\psicut[b,a]\circ  \rho \, f_0.
\end{equation}
Note that
$\Psibold[a,b;\rho ](f_0,f_1)$
depends linearly on the pair $(f_0,f_1)$
and transits from $f_0$
on $\Omega_a$ to $f_1$ on $\Omega_b$,
where $\Omega_a$ and $\Omega_b$ are subsets of $\Omega$ which contain
$\rho^{-1}(a)$ and $\rho^{-1}(b)$ respectively,
and are defined by
$$
\Omega_a=\rho^{-1}\left((-\infty,a+\frac13(b-a))\right),
\qquad
\Omega_b=\rho^{-1}\left((b-\frac13(b-a),\infty)\right),
$$
when $a<b$, and 
$$
\Omega_a=\rho^{-1}\left((a-\frac13(a-b),\infty)\right),
\qquad
\Omega_b=\rho^{-1}\left((-\infty,b+\frac13(a-b))\right),
$$
when $b<a$.
Clearly if $f_0,f_1,$ and $\rho$ are smooth then
$\Psibold[a,b;\rho ](f_0,f_1)$
is also smooth.

\section{Normal Graphs and their Willmore energy}
\label{Sgraph}

Given a smooth function $u$ defined on a domain $\Omega \subset \Sph^2$, we define its \emph{normal graph} over $\Omega$ by
\begin{align*}
\Gamma^u_\Omega = \left\{ \exp_p ( u(p) \nu(p)) : p\in \Omega\right\}\subset \Sph^3,
\end{align*}
where $\nu$ is the unit normal field on $\Sph^2 \subset \Sph^3$ given by $\nu(p) = (0,0,0, 1)$ for each $p\in \Sph^2$.  If $|u| < \pi/2$, the map $E_u : \Omega \rightarrow \Gamma^u_\Omega$ defined by $E_u(p) = \exp_p(u(p) \nu(p))$ is a diffeomorphism, and a short calculation shows the pullback of the area form  $d\sigma_u$ on $\Gamma^u_\Omega$ is 
\begin{align}
\label{Eda}
E^*_u d\sigma_u = \cos^2(u) \sqrt{1+ \sec^2(u) |\nabla u|^2} d\sigma,
\end{align}
where $d\sigma : = d\sigma_0$ is the Riemannian area form of $\Sph^2$.

Since the surfaces we construct are built from gluing together normal graphs, the following estimate of the bending energy of such a graph will be useful. 
\begin{lemma}
\label{LWomega}
Suppose $\Omega \subset \Sph^2$ is a domain with smooth boundary, $u \in C^2(\Omega)$, and that $\| u\|_{C^2(\Omega)} < \epsilon$ for a given small $\epsilon> 0$.  Then
\begin{align*}
\Wcal(\Gamma^u_\Omega) = |\Omega| + \frac{1}{4}\int_{\Omega} (\Delta u)(\Lcal u) \, d\sigma + \frac{1}{2} \int_{\partial \Omega} u \frac{\partial u}{\partial \eta}ds + O(\epsilon^4).
\end{align*}
\end{lemma}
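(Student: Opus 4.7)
The plan is to expand each ingredient of $\Wcal(\Gamma^u_\Omega) = |\Gamma^u_\Omega| + \frac{1}{4}\int_{\Gamma^u_\Omega} H^2 \, d\sigma_u$ to second order in $u$, and then to use a reflection-symmetry argument to improve the resulting cubic error to the claimed $O(\epsilon^4)$.

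For the area, Taylor expanding $\cos^2(u) = 1 - u^2 + O(u^4)$ and $\sqrt{1 + \sec^2(u)|\nabla u|^2} = 1 + \frac{1}{2}|\nabla u|^2 + O(\epsilon^4)$ in \eqref{Eda} gives $E^*_u d\sigma_u = (1 - u^2 + \frac{1}{2}|\nabla u|^2)d\sigma + O(\epsilon^4)d\sigma$, so $|\Gamma^u_\Omega| = |\Omega| + \int_\Omega \left(-u^2 + \frac{1}{2}|\nabla u|^2\right) d\sigma + O(\epsilon^4)$. For the mean curvature, since $\Sph^2 \subset \Sph^3$ is totally geodesic with $\mathrm{Ric}(\nu,\nu) = 2$, the linearization at $u=0$ of the mean curvature operator for normal graphs is exactly $\Lcal$, so $H = \Lcal u + O(\epsilon^2)$ in $C^0(\Omega)$. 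Combined with $E^*_u d\sigma_u = d\sigma + O(\epsilon^2)d\sigma$, this yields $\int_{\Gamma^u_\Omega} H^2 \, d\sigma_u = \int_\Omega (\Lcal u)^2 d\sigma + O(\epsilon^3)$.

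The key step is to sharpen this $O(\epsilon^3)$ to $O(\epsilon^4)$. For this I would exploit the reflection $R:(x^1, x^2, x^3, x^4) \mapsto (x^1, x^2, x^3, -x^4)$, an ambient isometry of $\R^4$ preserving $\Sph^3$ and $\Sph^2$. From the explicit geodesic formula $E_u(p) = \cos(u)\,p + \sin(u)\,e_4$ one sees $R \circ E_u = E_{-u}$, so $R(\Gamma^u_\Omega) = \Gamma^{-u}_\Omega$. Since $\Wcal$ is invariant under ambient isometries, $\Wcal(\Gamma^u_\Omega) = \Wcal(\Gamma^{-u}_\Omega)$; that is, the functional $u \mapsto \Wcal(\Gamma^u_\Omega)$ is even in $u$, so its Taylor expansion at $u = 0$ has only even-order terms. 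In particular the cubic error in the $H^2$ expansion vanishes, giving $\int H^2 d\sigma_u = \int(\Lcal u)^2 d\sigma + O(\epsilon^4)$.

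Combining the two expansions produces $\Wcal(\Gamma^u_\Omega) = |\Omega| + \int_\Omega \left(-u^2 + \frac{1}{2}|\nabla u|^2 + \frac{1}{4}(\Lcal u)^2\right) d\sigma + O(\epsilon^4)$. To match the stated form I would expand $(\Lcal u)^2 = (\Delta u)^2 + 4u\Delta u + 4u^2$ and apply $\int_\Omega u\Delta u\, d\sigma = -\int_\Omega |\nabla u|^2 d\sigma + \int_{\partial\Omega} u \frac{\partial u}{\partial \eta} ds$; a short manipulation collapses the interior integrals to $\frac{1}{4}\int_\Omega (\Delta u)(\Lcal u) d\sigma$ with boundary contribution $\frac{1}{2}\int_{\partial\Omega} u \frac{\partial u}{\partial \eta} ds$, matching the claim. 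The main obstacle here is the reflection-symmetry step: without it, one would need to compute the quadratic correction of the mean curvature operator explicitly and algebraically verify the required cubic-order cancellation.
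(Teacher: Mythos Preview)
Your proof is correct and follows the same route as the paper: expand the area form and $H^2$ to quadratic order in $u$, combine, and integrate by parts to reach the stated form. The only difference is cosmetic---the paper asserts $H = \Lcal u + O(\epsilon^4)$ directly from the totally-geodesic property, whereas your reflection-symmetry argument $\Wcal(\Gamma^u_\Omega) = \Wcal(\Gamma^{-u}_\Omega)$ explicitly supplies the reason the cubic contribution vanishes (indeed the pointwise version of the same symmetry shows $H$ is odd in $u$, hence $H = \Lcal u + O(\epsilon^3)$, which already suffices).
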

\begin{proof}
Expanding \eqref{Eda} using the smallness of $u$ and $|\nabla u|$, we have
\begin{align*}
E^*_u d\sigma_u = \left(  1-u^2 + \frac{1}{2}|\nabla u|^2 + O(\epsilon^4) \right) \, d\sigma.
\end{align*}
Because $\Lcal$ is the linearized mean curvature operator and $\Sph^2\subset \Sph^3$ is totally geodesic, the mean curvature $H$ of $\Gamma^u_\Omega$ satisfies $H = \Lcal u + O(\epsilon^4)$.  In combination with the preceding, we have
\begin{align*}
( \textstyle{\frac{1}{4}} H^2 +1) E^*_u d\sigma_u = \left( 1+ \frac{1}{4}(\Lcal u )^2 - u^2 + \frac{1}{2}|\nabla u|^2 + O(\epsilon^4)\right) d\sigma.
\end{align*} 
The conclusion now follows by integrating over $\Gamma^u_{\Omega}$ and integrating $\int_{\Omega}\frac{1}{2}|\nabla u|^2 d\sigma$ by parts. 
\end{proof}

\section{Catenoidal bridges and their Willmore energy}
\label{Sbridge}
Recall that the top half of a catenoid of size $\tau$ in Euclidean three-space can be written as a radial graph of the function $\phicat :[\tau, \infty)\rightarrow \R$ defined by 
\begin{align}
\label{Epcat}
\phicat(r) = 
\tau \arccosh \frac{r}{\tau} := \tau \left( \log \frac{2r}{\tau} + \log \left( \frac{1}{2} + \frac{1}{2}\sqrt{1- \frac{\tau^2}{r^2}}\right)\right).
\end{align}
For future reference we record that
\begin{align}
\label{Ephicatd}
\phicat'(r) = \frac{\tau}{\sqrt{r^2-\tau^2}}.
\end{align}

\begin{assumption}
\label{Atau0}
We fix a small positive number $\alpha$, for example $\alpha = 1/10$, and assume hereafter that $\tau>0$ is as small as needed in terms of $\alpha$.
\end{assumption}

\begin{definition}[Catenoidal bridges]
\label{dcatb}
Given $p\in \Sph^2$, we define the catenoidal bridge $\Gcat$ to be the union of the graphs of $\pm \phicat \circ \dbold_p$ on the domain $D_p(\tau^\alpha) \setminus D_p(\tau)$. 
\end{definition}

The following estimate on the bending energy of a $\Gcat$ should be compared with the estimate on the area of a portion of a Euclidean catenoid in \cite{Ketover}.

\begin{lemma}
\label{Lbridge}
For any $p\in \Sph^2$ and all sufficiently small $\tau>0$, 
\begin{align*}
\Wcal(\Gcat) \leq  2|D_p(\tau^\alpha)|  + \frac{8\pi}{3} \tau^2 |\log \tau|.
\end{align*}
\end{lemma}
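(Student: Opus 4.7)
The plan is to write $\Wcal(\Gcat) = |\Gcat| + \tfrac{1}{4}\int_{\Gcat} H^2\, d\sigma$ and estimate each piece separately, parametrizing the two halves of $\Gcat$ as the normal graphs of $\pm \phicat \circ \dbold_p$ over the annulus $D_p(\tau^\alpha)\setminus D_p(\tau)$.

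For the area, first apply \eqref{Eda}, the symmetry between the two halves, and the identity $\sqrt{1+(\phicat')^2} = \rho/\sqrt{\rho^2-\tau^2}$ from \eqref{Ephicatd}.  Since $\cos(\phicat)\sqrt{\cos^2(\phicat)+(\phicat')^2}\leq \sqrt{1+(\phicat')^2}$ pointwise, this gives $|\Gcat|\leq 4\pi\int_\tau^{\tau^\alpha}\rho\sin\rho/\sqrt{\rho^2-\tau^2}\,d\rho$.  To compare with $2|D_p(\tau^\alpha)| = 4\pi\int_0^{\tau^\alpha}\sin\rho\,d\rho$, I would write $\rho/\sqrt{\rho^2-\tau^2} - 1 = \tau^2/[(\rho+\sqrt{\rho^2-\tau^2})\sqrt{\rho^2-\tau^2}]$, bound $\sin\rho\leq\rho$, and substitute $s = \sqrt{\rho^2-\tau^2}$ to reduce the principal term to the explicit integral $4\pi\int_0^{\sqrt{\tau^{2\alpha}-\tau^2}}(\sqrt{s^2+\tau^2}-s)\,ds$, which evaluates to $2\pi(1-\alpha)\tau^2|\log\tau| + O(\tau^2)$.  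Together with the $-4\pi\int_0^\tau\sin\rho\,d\rho = -2\pi\tau^2 + O(\tau^4)$ correction, this yields $|\Gcat| \leq 2|D_p(\tau^\alpha)| + 2\pi(1-\alpha)\tau^2|\log\tau| + O(\tau^2)$.

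For the bending energy, the key observation is that $\phicat$ satisfies the Euclidean minimal-graph equation $\phicat''/(1+(\phicat')^2)^{3/2} + \phicat'/(\rho\sqrt{1+(\phicat')^2}) = 0$.  Applying the full (non-linearized) mean curvature formula for a rotationally symmetric normal graph over $\Sph^2\subset\Sph^3$ and using this identity, the Euclidean part cancels, leaving only spherical corrections: primarily a term of the form $\phicat'(\cot\rho - 1/\rho)/\sqrt{1+(\phicat')^2}$ (for which one uses $\cot\rho - 1/\rho = -\rho/3 + O(\rho^3)$ and $\phicat'/\sqrt{1+(\phicat')^2} = \tau/\rho$), a term like $\sin(\phicat)(2+3(\phicat')^2)/(1+(\phicat')^2)^{3/2}$, and $O(\phicat^2)$ deviations from linearization.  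Combined with $|\phicat|\leq \tau\arccosh(\tau^{\alpha-1}) = O(\tau|\log\tau|)$ on $[\tau,\tau^\alpha]$, I expect these to yield the uniform pointwise bound $|H|\leq C\tau|\log\tau|$ on $\Gcat$.  Since $|\Gcat| = O(\tau^{2\alpha})$, this produces $\int_{\Gcat} H^2\,d\sigma = O(\tau^{2+2\alpha}|\log\tau|^2) = o(\tau^2|\log\tau|)$.

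Combining, $\Wcal(\Gcat) \leq 2|D_p(\tau^\alpha)| + 2\pi(1-\alpha)\tau^2|\log\tau| + o(\tau^2|\log\tau|)$; since $2\pi(1-\alpha) < \tfrac{8\pi}{3}$ (equivalent to $\alpha > -\tfrac{1}{3}$, holding in particular for our $\alpha = 1/10$ from Assumption \ref{Atau0}), the lower-order terms can be absorbed for all sufficiently small $\tau > 0$ to give the stated bound.  The main technical obstacle will be the direct computation and pointwise estimate of $H$: one must carefully match the Euclidean and spherical contributions via the catenary identity $\sqrt{1+(\phicat')^2} = \rho/\sqrt{\rho^2-\tau^2}$ and control the $O(\phicat^2)$ corrections uniformly across $\rho\in[\tau,\tau^\alpha]$, in particular both near $\rho = \tau$ (where $(\phicat')^2$ blows up) and near $\rho = \tau^\alpha$ (where $\phicat$ attains its maximum).
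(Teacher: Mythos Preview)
Your proposal is correct and follows the same overall strategy as the paper: decompose $\Wcal(\Gcat)=|\Gcat|+\tfrac14\int H^2$, estimate the area carefully, and use the pointwise bound $|H|\le C\tau|\log\tau|$ (which the paper establishes in Appendix~\ref{SHbridge} via the catenoid parametrization~\eqref{EXcat}) to show the $H^2$ contribution is $O(\tau^{2(1+\alpha)}|\log\tau|^2)$.

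The one genuine difference is in the area estimate. The paper splits the radial integral at $r=9\tau$, bounds the inner piece crudely by $C\tau^2$, and on the outer piece expands $(\phicat')^2=\tau^2/(r^2-\tau^2)\le \tau^2/r^2+C\tau^4/r^4$ to obtain the coefficient $\tfrac{5\pi}{2}$ in front of $\tau^2|\log\tau|$. You instead use the exact identity $\sqrt{1+(\phicat')^2}=\rho/\sqrt{\rho^2-\tau^2}$ together with the substitution $s=\sqrt{\rho^2-\tau^2}$ to evaluate the excess integral in closed form, arriving at the sharper coefficient $2\pi(1-\alpha)$. This is precisely the leading term the paper records (without proof) in the Remark following Lemma~\ref{Lbridge}, so your route recovers that refinement directly; the trade-off is that the paper's cruder expansion avoids the antiderivative of $\sqrt{s^2+\tau^2}$ at the cost of a slightly worse (but still sufficient, since $\tfrac{5\pi}{2}<\tfrac{8\pi}{3}$) constant.
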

\begin{proof}
Integrating the area form from \eqref{Eda} and estimating, we have
\begin{align*}
\frac{1}{2}|\Gcat| &= 2\pi \int_\tau^{\tau^\alpha} \cos^2 \phicat (r) \sqrt{1+ \sec^2 \phicat(r) (\phicat'(r))^2} \sin r\, dr \\
&\leq C\tau^2 + 2\pi \int_{9\tau}^{\tau^\alpha} \cos^2 \phicat(r) \sqrt{1+ \sec^2 \phicat(r) (\phicat'(r))^2} \sin r\, dr \\
& \leq |D_p(\tau^\alpha)| + C\tau^2 +  \frac{5\pi}{4} \int_{9\tau}^{\tau^\alpha} (\phicat'(r))^2 r \, dr \\
&\leq |D_p(\tau^\alpha)| + C\tau^2 + \frac{5\pi}{4} \tau^2 \int_{9\tau}^{\tau^\alpha} \frac{1}{r}  + \frac{C\tau^2}{r^3} \, dr \\
&\leq  |D_p(\tau^\alpha)| + C\tau^2+\frac{5\pi}{4} \tau^2 |\log \tau|,
\end{align*}
where we have used \eqref{Ephicatd} to estimate $(\phicat'(r))^2$  on $(9 \tau, \tau^\alpha)$ as follows:
\begin{align*}
 (\phicat'(r))^2 = \frac{\tau^2}{r^2 - \tau^2} 
 \leq \frac{\tau^2}{r^2} + C \frac{\tau^4}{r^4}.
 \end{align*}
Finally, from \eqref{EHcat} in Appendix \ref{SHbridge}, we have the estimate $|H| \leq C\tau |\log \tau|$ on $\Gcat$.  In combination with the preceding, this implies
\begin{align*}
|\Wcal( \Gcat)| &\leq 
 2|D_p(\tau^\alpha)| + C\tau^2+\frac{5\pi}{2} \tau^2 |\log \tau| + C\tau^{2(1+\alpha)}|\log \tau|^2\\
 &\leq 2 |D_p(\tau^\alpha)|+ \frac{8\pi}{3} \tau^2|\log \tau|.
\end{align*}
\end{proof}

\begin{remark}
Although we will not need it, it is possible to prove the following strengthening of the estimate in Lemma \ref{Lbridge}: $\Wcal(\cat_{p, \tau}) = 2| D_p(\tau^\alpha)| + 2\pi \tau^2 \log (2 \tau^{\alpha-1}) - \pi \tau^2 + O(\tau^{2(1+\alpha)} |\log \tau|^2)$.
\end{remark}

\section{Proof of the Main Theorem}
\label{SLD}
In order to study singular solutions of the equation $\Delta (\Lcal \varphi) = 0$, we first recall (cf. \cite[Lemma 2.20]{Kap:sphere}) the Green's function for $\Lcal$ on $\Sph^2$. 

\begin{lemma}
\label{LG}
The function $G \in C^\infty((0, \pi))$ defined by
\begin{align*}
G(r) : = \cos r \log \left( 2 \tan \textstyle{\frac{r}{2}}\right)+ 1 - \cos r 
\end{align*}
has the following properties:
\begin{enumerate}[label=\emph{(\roman*)}]
\item For each $p\in \Sph^2$, we have $\Lcal ( G \circ \dbold_p) = 0$ on $\Sph^2\setminus \{p, -p\}$. 
\item For small $r>0$, $G(r) = (1+O(r^2))\log r$. 
\item The following estimate holds for $r\in (\tau^\alpha, 9\tau^\alpha)$ and $k \in \{0, 1, 2\}$:
\begin{align*}
\left| \frac{d^k}{dr^k} ( \phicat(r) - \tau G(r) + \tau \log\frac{\tau}{2} \cos r) \right| \leq C \tau^{1+(2-k)\alpha} |\log \tau|.
\end{align*}
\end{enumerate}
\end{lemma}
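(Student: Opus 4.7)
The plan is to establish (i), (ii), (iii) in turn, with (iii) being the main substance.

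For (i), since $G \circ \dbold_p$ is radial, $\Lcal$ reduces to the ODE operator $\partial_r^2 + \cot r\,\partial_r + 2$ in geodesic polar coordinates centered at $p$. The one-variable identity $\frac{d}{dr}\log(2\tan(r/2)) = \csc r$ yields $G'(r) = -\sin r\log(2\tan(r/2)) + \cot r + \sin r$; differentiating once more and applying $\cot^2 r - \csc^2 r = -1$ gives $\Lcal G = 0$ on $(0,\pi)$ by direct inspection.

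For (ii), I would Taylor expand: from $\cos r = 1 + O(r^2)$ and $\tan(r/2) = r/2 + O(r^3)$ one has $\log(2\tan(r/2)) = \log r + O(r^2)$, and combining with $1-\cos r = O(r^2)$ gives $G(r) = \log r + O(r^2 |\log r|) = (1 + O(r^2))\log r$.

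For (iii) I would exploit two separate small parameters on the annulus $r \in (\tau^\alpha, 9\tau^\alpha)$: the ratio $\tau/r \leq \tau^{1-\alpha}$ and the variable $r = O(\tau^\alpha)$ itself. Expanding the square root in \eqref{Epcat} gives
\[
\phicat(r) = \tau\log(2r/\tau) + O(\tau^3/r^2),
\]
and differentiating \eqref{Ephicatd} similarly yields $\phicat'(r) = \tau/r + O(\tau^3/r^3)$ and $\phicat''(r) = -\tau/r^2 + O(\tau^3/r^4)$. On the other hand, using (ii) and $\cos r = 1 + O(r^2)$,
\[
\tau G(r) - \tau\log(\tau/2)\cos r = \tau\log(2r/\tau) + O(\tau r^2 |\log \tau|),
\]
so the $k=0$ bound follows by substituting $r = O(\tau^\alpha)$. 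For $k=1,2$, I would use the closed-form expression for $G'$ supplied by part (i) to write out $\partial_r$ and $\partial_r^2$ of $\tau G - \tau\log(\tau/2)\cos r$ explicitly, match the leading singular terms $\tau/r$ and $-\tau/r^2$ against $\phicat'$ and $\phicat''$, and then check that the subleading pieces contribute only $O(\tau r^{2-k}|\log \tau|)$ after the convenient cancellation between the $-\sin r\log(2\tan(r/2))$ term in $G'$ and the $+\log(\tau/2)\sin r$ term produced when the shift $-\tau\log(\tau/2)\cos r$ is differentiated.

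The main obstacle is this bookkeeping in the $k=1,2$ cases: several terms each carry their own logarithmic factor, and one must verify that they combine without producing a spurious $|\log\tau|^2$. Part (i) is used precisely to keep $G^{(k)}$ in a form where these cancellations are manifest; with that in hand, the claimed bound $\tau^{1+(2-k)\alpha}|\log\tau|$ drops out after writing $r^{2-k} = O(\tau^{(2-k)\alpha})$ on the annulus of interest.
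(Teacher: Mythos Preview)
Your argument is correct and follows essentially the same route as the paper: both compare $\phicat$ and $\tau G - \tau\log(\tau/2)\cos r$ to the common anchor $\tau\log(2r/\tau)$ and bound the two discrepancies separately. The only difference is organizational---the paper packages the $G$-side discrepancy via the exact identity
\[
G(r) - \log\tfrac{2r}{\tau} - \log\tfrac{\tau}{2}\cos r \;=\; (1-\cos r)\bigl(1+\log\tfrac{\tau}{2r}\bigr) + \cos r\,\log\!\bigl(\tfrac{2}{r}\tan\tfrac{r}{2}\bigr),
\]
which makes the $k=0,1,2$ bounds immediate and dissolves the bookkeeping you flag as the main obstacle.
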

\begin{proof}
Items (i) and (ii) are straightforward calculations, see e.g. \cite[Lemma 2.20]{Kap:sphere}.  Next, we calculate 
\begin{align*}
G(r) - \log \frac{2r}{\tau} - \log \frac{\tau}{2} \cos r = 
(1-\cos r) ( 1+ \log \frac{\tau}{2r}) + \cos r \log \left( \frac{2}{r} \tan \frac{r}{2}\right).
\end{align*}
It follows from this that for $r\in ( \tau^\alpha, 9\tau^\alpha)$ and $k \in \{0, 1, 2\}$,
\begin{align*}
\left| \frac{d^k}{dr^k} ( G(r)- \log\frac{2r}{\tau}-  \log\frac{\tau}{2} \cos r) \right| \leq C \tau^{(2-k)\alpha} |\log \tau|.
\end{align*}
The conclusion follows from this and \eqref{Epcat}.
\end{proof}

In the remainder of this section, $m\in \N$ will denote a given natural number.  For simplicity of notation, we will suppress the dependence of various constants on $m$. 

\begin{assumption}
\label{Atau1}
We assume hereafter that $\tau>0$ is as small as needed in terms of $m$. 
\end{assumption}

We first define the set $L \subset \Sph^2$ of centers of the bridges in the construction, or equivalently the set of logarithmic singularities of $\varphi[m, \tau]$ defined later in \ref{dvarphi}.

\begin{definition}
\label{dL}
Let $L = L[m]: = \big\{ ( \cos \textstyle{ \frac{2\pi k}{m}} , \sin \frac{2\pi k}{m}, 0, 0)\in \Sph^2: k=1, \dots, m\big \}$.
\end{definition}

We next consider a particular discrete family of LD solutions which were studied in \cite[Def. 6.1]{Kap:sphere}:

\begin{lemma}
\label{LPhi}
For $m \geq 2$, there is a function $\Phi = \Phi[m]$ uniquely determined by the following:
\begin{enumerate}[label=\emph{(\roman*)}]
\item $\Phi \in C^\infty(\Sph^2 \setminus L)$ and $\Lcal \Phi = 0$ on $\Sph^2 \setminus L$. 
\item $\Phi - \log \dbold_L$ is bounded on some deleted neighborhood of $L$ in $\Sph^2$.
\item $\Phi$ is invariant under the group of isometries of $\Sph^2$ which fix $L[m]$ as a set. 
\end{enumerate}
\end{lemma}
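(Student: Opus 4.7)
The plan is to prove uniqueness first and then construct $\Phi$ via a Fredholm-plus-symmetrization argument.

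\emph{Uniqueness.} If $\Phi_1$ and $\Phi_2$ both satisfy (i)--(iii), then $u := \Phi_1 - \Phi_2$ is bounded in a deleted neighborhood of $L$ and satisfies $\Lcal u = 0$ on $\Sph^2 \setminus L$. Standard removable singularity theory for second-order elliptic operators implies that $u$ extends smoothly across $L$ to a solution of $\Lcal u = 0$ on all of $\Sph^2$; hence $u \in \ker \Lcal = \mathrm{span}\{x^1, x^2, x^3\}$. By (iii), $u$ is invariant under the isometry group of $\Sph^2$ preserving $L[m]$, which contains the rotation $R$ about the $x^3$-axis by $2\pi/m$ and the reflection $\sigma$ across $\{x^3=0\}$. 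For $m\geq 2$, $R$ admits no nonzero invariants in $\mathrm{span}\{x^1,x^2\}$, and $\sigma$ forces the $x^3$-component to vanish. Hence $u = 0$.

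\emph{Existence.} Set $\mu := 2\pi\sum_{p \in L[m]} \delta_p$, viewed as a distribution on $\Sph^2$. By the Fredholm alternative, $\Lcal u = \mu$ admits a distributional solution iff $\mu$ is $L^2$-orthogonal to $\ker \Lcal$, which reduces to the balancing conditions $\sum_{p \in L[m]} p^i = 0$ for $i = 1,2,3$. These hold by direct calculation: $\sum_{k=1}^m \cos(2\pi k/m) = \sum_{k=1}^m \sin(2\pi k/m) = 0$ for $m \geq 2$, and every $p \in L[m]$ has $p^3 = 0$. Elliptic regularity gives $u \in C^\infty(\Sph^2 \setminus L)$, and since $\tfrac{1}{2\pi}\log r$ is the two-dimensional fundamental solution of $\Delta$, the normalization of $\mu$ yields $u - \log \dbold_p = O(1)$ near each $p \in L$, so $u$ satisfies (i) and (ii). To enforce (iii), I would symmetrize by setting $\Phi := \tfrac{1}{|G[m]|}\sum_{g\in G[m]} u \circ g$, where $G[m]$ is the finite group of isometries of $\Sph^2$ preserving $L[m]$ as a set. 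Each such $g$ is an isometry permuting $L$, so $\Lcal(u \circ g) = g_*\mu = \mu$ and $\Phi$ is again a solution; it retains the same leading asymptotics at each point of $L$ and is manifestly $G[m]$-invariant.

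\emph{Anticipated obstacle.} The only delicate point is confirming that the coefficient of the logarithmic singularity at each $p \in L$ is exactly $1$, i.e.\ that $\Phi - \log \dbold_L$ (rather than some other multiple) is bounded. A more concrete alternative is to build $\Phi$ directly from the radial Green's functions $G \circ \dbold_p$ of Lemma \ref{LG}: for $m$ odd, the naive sum $\sum_{p \in L} G \circ \dbold_p$ creates unwanted $\log$ singularities at the antipodal set $-L$ (since $G$ is singular at both $0$ and $\pi$), so the Fredholm route is cleaner, whereas for $m$ even one must renormalize by $\tfrac{1}{2}$ because the coincidence $L = -L$ doubles the contribution. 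In either form, the essential input is the balancing identity $\sum_{p \in L[m]} p = 0$, which is precisely the linearized analog of the flux condition for catenoidal bridges centered at $L$.
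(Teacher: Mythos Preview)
Your argument is essentially correct and, unlike the paper's one-line citation of \cite[Lemma 3.10]{Kap:sphere}, supplies a direct proof. The uniqueness half is clean: the kernel of $\Lcal$ on $\Sph^2$ is exactly the span of the coordinate functions, and the symmetry group kills all three. The existence half via the Fredholm alternative and the balancing identity $\sum_{p\in L[m]}p=0$ is the right idea, and the normalization $\mu=2\pi\sum_{p}\delta_p$ does give leading singularity $\log\dbold_p$ with coefficient exactly $1$, since in geodesic normal coordinates $\Lcal$ agrees with the Euclidean Laplacian to leading order (or, more concretely, by comparison with $G\circ\dbold_p$ from Lemma~\ref{LG}). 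So the ``anticipated obstacle'' you flag is not actually an obstacle.

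There is, however, one genuine slip: for $m=2$ the symmetry group $G[2]$ is \emph{not} finite. The set $L[2]=\{\pm e_1\}$ is a pair of antipodal points and is preserved by every rotation about the $x^1$-axis, so $G[2]\cong O(2)\times\Z_2$, and your finite average $\tfrac{1}{|G[m]|}\sum_{g\in G[m]}u\circ g$ is undefined. For $m\geq 3$ the points of $L[m]$ span the plane $\{x^3=0\}$, forcing any isometry preserving $L[m]$ to preserve the equator, and then $G[m]\cong D_m\times\Z_2$ is indeed finite of order $4m$. The fix is painless: replace the finite average by integration against normalized Haar measure on the compact group $G[m]\subset O(3)$; or observe that the solution set of (i)--(ii) is an affine space modeled on the $3$-dimensional $\ker\Lcal$, on which the compact group $G[m]$ acts affinely and hence has a fixed point; or, for $m=2$ specifically, just invoke the explicit rotationally symmetric formula recorded in Remark~\ref{Rm2}.
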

\begin{proof}
This follows immediately from \cite[Lemma 3.10]{Kap:sphere}.
\end{proof}

\begin{remark}
\label{Rm2}
When $m=2$, $\Phi$ depends only on $\dbold_L$; in particular, (cf. \cite[Def. 2.18]{Kap:sphere}) we have 
 $\Phi = G\circ \dbold_L +(1-\log 2)\cos \dbold_L= 1+ \cos \dbold_L \log \tan \frac{\dbold_L}{2}$.
\end{remark}

Although there is no function $\Phi[m]$ satisfying \ref{LPhi}(i)-(iii) when $m=1$, the following definition will be sufficient for our later applications in that case.

\begin{definition}
\label{dPhi}
Let $\Phi = \Phi[m]$ be as in \ref{LPhi} for $m\geq 2$ and define $\Phi[m]$ for $m=1$ by
\begin{align*}
\Phi = \Psibold[ \textstyle{\frac{\pi}{3}}, \frac{\pi}{2}; \dbold_L]( G\circ \dbold_L , 0) .
\end{align*}
\end{definition}

Before we define the surfaces $\Siginit$ used in Theorem \ref{TMain}, we need to extract from $\Phi$ the dominant singular and constant parts in the vicinity of $L$.

\begin{lemma}
\label{Ldecomp}
There is a unique $c_0 = c_0[m] \in \R$ such that the function $\Phip$ defined by the decomposition
\begin{align}
\label{Ephip}
\Phi = G \circ \dbold_L + c_0 \cos \dbold_L + \Phip  \quad \text{on} \quad D_L(\delta), \quad
\delta: = 1/(10m)
\end{align}
satisfies $\Phip(p) =0$ for each $p\in L$.  Moreover, $\Phip \in C^\infty(D_L(\delta))$ and $d_p \Phip = 0$ for each $p\in L$. 
\end{lemma}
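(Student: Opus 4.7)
For the main case $m \geq 2$, the plan is to first show that $u := \Phi - G \circ \dbold_L$ extends smoothly across $L$, then use cyclic symmetry to force $u$ to take a common value $c_0$ on $L$, and finally use reflectional symmetries at each point of $L$ to force $d_p \Phip$ to vanish; the case $m=1$ will be immediate from the definition.

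For the extension step, first I would verify on $D_L(\delta) \setminus L$ that $u$ is a bounded solution of $\Lcal u = 0$. Boundedness follows by subtracting $\log \dbold_L$ from $\Phi$ and from $G \circ \dbold_L$ and invoking \ref{LPhi}(ii) and \ref{LG}(ii) respectively. Regarding the equation, $\Lcal \Phi = 0$ by \ref{LPhi}(i); moreover, on each component $D_p(\delta)$ of $D_L(\delta)$ one has $\dbold_L = \dbold_p$ since $\delta = 1/(10m) < \pi/m$ makes the disks $D_p(\delta)$ pairwise disjoint, so $G \circ \dbold_L = G \circ \dbold_p$ there, which is annihilated by $\Lcal$ on $D_p(\delta) \setminus \{p\}$ by \ref{LG}(i) (the antipode $-p$ lying far outside $D_p(\delta)$). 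Standard removable-singularity theory for the second-order elliptic operator $\Lcal$ on a punctured two-disk then promotes $u$ to a smooth solution on all of $D_L(\delta)$.

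Next I would set $c_0 := u(p_\ast)$ for some fixed $p_\ast \in L$. Property \ref{LPhi}(iii) supplies the rotation of order $m$ about the $x^3$-axis, which acts transitively on $L$ and preserves both $\Phi$ and $\dbold_L$, hence $u$; thus $u \equiv c_0$ on $L$. Setting $\Phip := u - c_0 \cos \dbold_L$, and using that $\cos \dbold_L = \cos \dbold_p$ is smooth on each $D_p(\delta)$, yields $\Phip \in C^\infty(D_L(\delta))$ with $\Phip(p) = c_0 - c_0 = 0$ for each $p \in L$. Uniqueness of $c_0$ is immediate by evaluating the stated decomposition at any single point of $L$.

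For $d_p \Phip = 0$, at each $p \in L$ I would invoke two reflectional isometries of $\Sph^2$ fixing $p$ and preserving $L$ setwise: the reflection through the great circle containing $p$ and the $x^3$-axis, and the reflection through the equator $\{x^3=0\}$ containing $L$. These involutions negate two independent directions in $T_p \Sph^2$. Each summand of $\Phip = \Phi - G \circ \dbold_p - c_0 \cos \dbold_p$ on $D_p(\delta)$ is invariant under both---$\Phi$ by \ref{LPhi}(iii), and the latter two by rotational symmetry about $p$---so $\Phip$ is invariant, forcing $d_p \Phip$ to vanish on all of $T_p \Sph^2$. Finally, the case $m=1$ is trivial: by \ref{dPhi}, the cutoff factor $\psicut[\pi/2, \pi/3] \circ \dbold_L$ equals $1$ throughout $\{\dbold_L < \pi/3\}$, so $\Phi = G \circ \dbold_L$ on $D_L(\pi/3) \supset D_L(\delta)$, giving $u \equiv 0$, forcing $c_0 = 0$ and $\Phip \equiv 0$. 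I expect the one subtlety is invoking the removable-singularity result cleanly; the symmetry arguments themselves are elementary.
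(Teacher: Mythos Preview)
Your proof is correct and follows essentially the same approach as the paper: show that $\Phi - G\circ\dbold_L$ (or equivalently $\Phip$ for any choice of $c_0$) is a bounded $\Lcal$-harmonic function on the punctured disks, invoke removable singularities for smoothness, use the cyclic symmetry to make $\Phip(p)$ independent of $p\in L$, and use reflectional symmetry at each $p$ to kill $d_p\Phip$. The paper's argument is simply a terser version of yours; you have supplied the specific symmetries and the $m=1$ case explicitly, which the paper leaves implicit.
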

\begin{proof}
For any $c_0 \in \R$, it follows from \ref{LG} and \ref{LPhi} that $\Phip$ as defined by \eqref{Ephip} is bounded on $D_L(\delta)\setminus L$ and satisfies $\Lcal \Phip = 0$ there.  This implies the smoothness of $\Phip$ on $D_L(\delta)$ by standard removable singularity results.  The symmetries imply that $d_p \Phip = 0$ for each $p\in L$ and $\Phip(p)$ is independent of the choice of $p\in L$.  There is therefore a unique $c_0 \in \R$ such that $\Phip(p) =0$ for all $p\in L$.
\end{proof}

\begin{corollary} For $k\in \{0, 1, 2\}$, $\Phip$ satisfies  $|\nabla^{(k)} \Phip| \leq C ( \dbold_L)^{2-k}$ on $D_{L}(\delta)$. 
\label{Ephipest}
\end{corollary}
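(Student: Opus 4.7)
The plan is to reduce to a local Taylor expansion at each point of $L$. First, I would observe that because $\delta = 1/(10m)$, the open set $D_L(\delta)$ is a disjoint union of the geodesic disks $D_p(\delta)$ as $p$ ranges over $L$, and on each such disk we have $\dbold_L = \dbold_p$. It therefore suffices to establish the bound $|\nabla^{(k)} \Phi'| \le C (\dbold_p)^{2-k}$ on $D_p(\delta)$ for each individual $p \in L$.

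Next, by Lemma \ref{Ldecomp}, $\Phi' \in C^\infty(D_L(\delta))$, so $\|\Phi'\|_{C^2(\overline{D_p(\delta/2)})}$ is finite; in fact, by taking $\delta$ slightly smaller than $1/(10m)$ and applying standard interior elliptic estimates to $\Lcal \Phi' = 0$ (or simply using smoothness of $\Phi'$ on the larger open set), we get a uniform constant $C$, independent of $p \in L$ by the symmetry in Lemma \ref{LPhi}(iii), such that $|\nabla^{(j)} \Phi'| \le C$ on $D_p(\delta)$ for $j \in \{0,1,2\}$. This handles the case $k=2$ immediately, where the right-hand side is simply $C$.

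For $k \in \{0,1\}$, I would apply Taylor's theorem at $p$. Since Lemma \ref{Ldecomp} gives $\Phi'(p) = 0$ and $d_p \Phi' = 0$, the Taylor expansion of $\Phi'$ along a geodesic segment from $p$ to any $q \in D_p(\delta)$ yields
\begin{equation*}
\Phi'(q) = \tfrac{1}{2} \,\nabla^{(2)} \Phi'(\xi)\bigl( \dot{\gamma}(0), \dot{\gamma}(0)\bigr) \, \dbold_p(q)^2
\end{equation*}
for some $\xi$ on the connecting geodesic $\gamma$, which gives $|\Phi'(q)| \le C\, \dbold_p(q)^2 = C(\dbold_L(q))^2$. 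Similarly, since $\nabla \Phi'(p) = 0$, the mean value theorem applied to $\nabla \Phi'$ along the geodesic from $p$ to $q$ gives $|\nabla \Phi'(q)| \le C \, \dbold_p(q) = C\, \dbold_L(q)$.

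I do not foresee a serious obstacle: the entire statement is a consequence of the smoothness of $\Phi'$ on $D_L(\delta)$ together with the vanishing of $\Phi'$ and its first derivative at $L$, both established in Lemma \ref{Ldecomp}. The only mild subtlety is confirming that the constant $C$ can be chosen independently of $p \in L$, which follows from the symmetry in Lemma \ref{LPhi}(iii) (or, in the $m=1$ case, trivially since $L$ is a single point).
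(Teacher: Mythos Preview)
Your proof is correct and is precisely the argument the paper has in mind: the corollary is stated in the paper without proof, as an immediate consequence of the smoothness of $\Phi'$ on $D_L(\delta)$ together with the vanishing $\Phi'(p)=0$ and $d_p\Phi'=0$ established in Lemma~\ref{Ldecomp}, and your Taylor-expansion argument supplies exactly those omitted details.
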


\begin{remark}
When $m=1$, note that $c_0 = 0$ and $\Phip \equiv 0$.  When $m=2$, we have via \ref{Rm2} that $c_0 = 1-\log 2$ and $\Phip \equiv 0$.
\end{remark}

\begin{definition}
\label{dvarphi}
Given $\tau>0$, define $\varphi = \varphi[m, \tau] \in C^\infty(\Sph^2 \setminus L)$ by
\begin{align*}
\varphi = 
c_1 + \tau \Phi,
\end{align*}
where $L$ and $\Phi$ are as in \ref{dL} and \ref{dPhi}, $c_1: = \tau \log(2/\tau)-\tau c_0$,  and $c_0$ is as in \ref{Ldecomp}. 
\end{definition}

We are now ready to define the family of surfaces used in Theorem \ref{TMain}.

\begin{definition}
\label{dphi} 
Given $\tau>0$ as in \ref{Atau1},  define the smooth surface $\Siginit \subset \Sph^3$ to be the union over $\Sph^2 \setminus D_L(\tau)$ of the graphs of $\pm \phigl$, where $\phigl: \Sph^2 \setminus D_L(\tau) \rightarrow [0, \infty)$ is defined as follows:
\begin{enumerate}[label=\emph{(\roman*)}]
\item On $\Sph^2 \setminus D_L(2\tau^\alpha)$ we have $\phigl = \varphi[m, \tau]$. 

\item On $D_L(2 \tau^\alpha)\setminus D_L(\tau)$ we have (recall \ref{EPsibold})
	\begin{align*}
	\phigl = \Psibold[ \tau^\alpha, 2 \tau^\alpha; \dbold_L] ( \phicat \circ \dbold_L, \varphi[m, \tau]).
	\end{align*}
\end{enumerate}
Finally, we define $\Omega: = \Sph^2 \setminus D_L(\tau^\alpha)$, so that (recall \ref{dcatb}) $\Siginit =( \bigcup_{p\in L} \cat_{p, \tau} )\cup \Gamma^{\phigl}_{\Omega} \cup \Gamma^{-\phigl}_{\Omega}$.
\end{definition}

\begin{lemma}
\label{Lext}
For all sufficiently small $\tau>0$,
\begin{align*}
\Wcal( \Gld) \leq   |\Sph^2| - | D_L(\tau^\alpha)| - \frac{11m\pi}{6} \tau^2 |\log \tau|. 
\end{align*}
\end{lemma}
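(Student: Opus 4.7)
The plan is to apply Lemma \ref{LWomega} with $u=\phigl$ on $\Omega=\Sph^2\setminus D_L(\tau^\alpha)$. The smallness hypothesis is satisfied with $\epsilon = O(\tau^{1-2\alpha})$ (estimating $\|\phigl\|_{C^2(\Omega)}$ using the near-$L$ expansions of $\varphi$ and $\phicat\circ\dbold_L$, together with standard cutoff derivative bounds), so the $O(\epsilon^4)$ remainder is much smaller than $\tau^2|\log\tau|$. It then suffices to estimate the interior integral $\tfrac14\int_\Omega(\Delta\phigl)(\Lcal\phigl)\,d\sigma$ and the boundary integral $\tfrac12\int_{\partial\Omega}\phigl\,\tfrac{\partial\phigl}{\partial\eta}\,ds$ separately.

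For the interior integral I would split $\Omega$ into $\Omega':=\Sph^2\setminus D_L(2\tau^\alpha)$, on which $\phigl=\varphi=c_1+\tau\Phi$, and the transition annulus $A:=D_L(2\tau^\alpha)\setminus D_L(\tau^\alpha)$. On $\Omega'$ one has $\Lcal\varphi=2c_1$ (modulo a negligible contribution from the artificial cutoff annulus of $\Phi[1]$ when $m=1$) and $\Delta\varphi=\tau\Delta\Phi$, so
\begin{equation*}
\tfrac14\int_{\Omega'}(\Delta\varphi)(\Lcal\varphi)\,d\sigma=\tfrac{c_1\tau}{2}\int_{\Omega'}\Delta\Phi\,d\sigma+O(\tau^2).
\end{equation*}
The divergence theorem converts the remaining integral into a boundary flux over $\partial D_L(2\tau^\alpha)$; using the decomposition $\Phi=G\circ\dbold_L+c_0\cos\dbold_L+\Phip$ from Lemma \ref{Ldecomp}, the expansion $G'(r)=1/r+O(r|\log r|)$ for small $r$, and Corollary \ref{Ephipest}, each of the $m$ boundary circles contributes $-2\pi$ at leading order, giving $\int_{\Omega'}\Delta\Phi\,d\sigma=-2m\pi+O(\tau^{2\alpha}|\log\tau|)$. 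Inserting $c_1=\tau\log(2/\tau)-\tau c_0$ yields $\tfrac14\int_{\Omega'}(\Delta\varphi)(\Lcal\varphi)\,d\sigma=-m\pi\tau^2|\log\tau|+O(\tau^2)$. For the annulus $A$, Lemma \ref{LG}(iii) (together with the parallel estimates for $\varphi-\phicat\circ\dbold_L$) and standard bounds on derivatives of $\Psibold$ give $|\Delta\phigl|,|\Lcal\phigl|=O(\tau|\log\tau|)$; since $|A|=O(\tau^{2\alpha})$, the $A$-contribution is $O(\tau^{2+2\alpha}|\log\tau|^2)$, hence lower order.

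For the boundary integral, note that by the definition of $\Psibold$, $\phigl$ coincides with $\phicat\circ\dbold_p$ on a one-sided neighborhood of $\partial D_p(\tau^\alpha)$ inside $A$, so $\phigl|_{\partial D_p(\tau^\alpha)}=\phicat(\tau^\alpha)$ and $\tfrac{\partial\phigl}{\partial\eta}|_{\partial D_p(\tau^\alpha)}=-\phicat'(\tau^\alpha)$. Lemma \ref{LG}(iii) gives $\phicat(\tau^\alpha)=(1-\alpha)\tau|\log\tau|+O(\tau)$, while \eqref{Ephicatd} gives $\phicat'(\tau^\alpha)=\tau^{1-\alpha}(1+O(\tau^{2-2\alpha}))$. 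Multiplying by the circle length $2\pi\sin(\tau^\alpha)\sim 2\pi\tau^\alpha$ and summing over the $m$ punctures produces $\tfrac12\int_{\partial\Omega}\phigl\,\tfrac{\partial\phigl}{\partial\eta}\,ds=-m\pi(1-\alpha)\tau^2|\log\tau|+O(\tau^2)$.

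Combining the two contributions, $\Wcal(\Gld)\leq|\Omega|-m\pi(2-\alpha)\tau^2|\log\tau|+O(\tau^2)$. With $\alpha=1/10$ the coefficient becomes $-19m\pi/10$, strictly smaller than $-11m\pi/6$; the gap $m\pi/15$ absorbs the $O(\tau^2)$ correction for $\tau$ sufficiently small, and recalling $|\Omega|=|\Sph^2|-|D_L(\tau^\alpha)|$ yields the claim. The main obstacle is the flux computation $\int_{\partial D_L(2\tau^\alpha)}\tfrac{\partial\Phi}{\partial\eta}\,ds=-2m\pi+\text{l.o.t.}$, which requires pairing the residue $-2\pi$ per puncture (produced by the logarithmic singularity of $G$) with the correct coefficient in $c_1$; this pairing is made possible precisely by the matching choice $c_1=\tau\log(2/\tau)-\tau c_0$ in Definition \ref{dvarphi}, and it is what ultimately forces the leading term $-m\pi\tau^2|\log\tau|$ in the interior integral.
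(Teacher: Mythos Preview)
Your proposal is correct and follows essentially the same approach as the paper's own proof: apply Lemma~\ref{LWomega}, split the interior integral into the gluing annulus $A_L$ (where $|\Delta\phigl|,|\Lcal\phigl|=O(\tau|\log\tau|)$ gives a contribution $O(\tau^{2+2\alpha}|\log\tau|^2)$) and its complement (where $\Lcal\varphi=2c_1$ and the divergence theorem converts $\int\Delta\Phi$ into a boundary flux $-2m\pi+\text{l.o.t.}$), and then evaluate the $\partial\Omega$ boundary term using the explicit catenoid values $\phicat(\tau^\alpha)$ and $\phicat'(\tau^\alpha)$. Your write-up is in fact more explicit than the paper's about the resulting numerical coefficient $-(2-\alpha)m\pi$, but the structure and key ingredients coincide.
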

\begin{proof} 
We first prove the inequality in the case where $m\geq 2$.  
Since $\phigl = \varphi = c_1 + \tau \Phi$ on $\Omega \setminus A_L$, where $A_L : = D_L(2\tau^\alpha) \setminus D_L(\tau^\alpha)$ is the gluing region,
\begin{align*} 
\int_{\Omega}( \Delta \phigl)( \Lcal \phigl) d\sigma &= \int_{A_L}  (\Delta \phigl)( \Lcal \phigl) d\sigma + 2c_1\tau \int_{\Omega \setminus A_L} \Delta \Phi d\sigma\\
&=  \int_{A_L}  (\Delta \phigl)( \Lcal \phigl) d\sigma + 2 c_1\tau \int_{\partial (\Omega \setminus A_L)} \frac{\partial \Phi}{\partial \eta} ds. 
\end{align*}
It follows from \ref{Ldecomp}, \ref{dvarphi}, and \ref{dphi} that
\begin{align*}
\phigl = \tau G\circ \dbold_L - \tau \log \frac{\tau}{2} \cos \dbold_L +
\Psibold[ \tau^\alpha, 2\tau^\alpha; \dbold_L]( \varphiunder, \varphiover),
\end{align*}
on $A_L$, where
\begin{equation*}
\begin{aligned}
\varphiunder & := \phicat \circ \dbold_L - \tau G \circ \dbold_L + \tau \log (\tau/2) \cos \dbold_L, \\
\varphiover &:= c_1 (1- \cos \dbold_L) + \tau \Phip.
\end{aligned}
\end{equation*}
Therefore, we have on $A_L$
\begin{align*}
\Delta \phigl &= -2 \tau ( G\circ \dbold_L - \log (\tau/2) \cos \dbold_L) + \Delta ( \Psibold[ \tau^\alpha, 2\tau^\alpha; \dbold_L] (\varphiunder, \varphiover)), \\
\Lcal \phigl &= \Lcal  \Psibold[ \tau^\alpha, 2\tau^\alpha; \dbold_L] (\varphiunder, \varphiover),
\end{align*}
and moreover from Lemma \ref{LG}(iii) and \eqref{Ephipest} that $|\Delta \phigl| \leq C \tau | \log \tau|$ and $|\Lcal \phigl| \leq C \tau |\log \tau|$.  Hence,
\begin{align}
\label{Ell}
\int_{\Omega}( \Delta \phigl)( \Lcal \phigl) d\sigma = 
2 c_1\tau \int_{\partial (\Omega \setminus A_L)} \frac{\partial \Phi}{\partial \eta} ds + O(\tau^{2(1+\alpha)} |\log \tau|^2).
\end{align}

We now apply Lemma \ref{LWomega}. First, using \ref{LG} and Lemma \ref{Ldecomp}, it is straightforward to check that $\| \phigl \|_{C^2(\Omega)} \leq C \tau^{1-2\alpha}$.  Working \eqref{Ell} into \ref{LWomega} establishes
\begin{align*}
\Wcal( \Gld) &= |\Sph^2| - |D_L(\tau^\alpha)|+
\frac{1}{2} \int_{\partial \Omega} \phigl \frac{\partial \phigl}{\partial \eta}ds +
\frac{1}{2} c_1 \tau\int_{\partial (\Omega \setminus A_L)} \frac{\partial \Phi}{\partial \eta} \, ds\\
&\phantom{=} + O(\tau^{2(1+\alpha)}| \log \tau|^2)+ O(\tau^{4(1-2\alpha)}). 
\end{align*}
The desired inequality follows from this by using that $\phigl=\phicat\circ \dbold_L$ on a neighborhood of $\partial \Omega$ and estimating the integrals using \eqref{Epcat} and \eqref{Ephicatd}, Lemma \ref{LG}, and Lemma \ref{Ldecomp}.

We now consider the case where $m=1$.  Since $\Lcal \Phi$ is now supported on $A_1 : =  D_L(\frac{\pi}{2})\setminus D_L(\frac{\pi}{3})$ (recall \ref{dPhi}),
\begin{align*}
\int_{\Omega}( \Delta \phigl)( \Lcal \phigl) d\sigma &=  \int_{A_L}  (\Delta \phigl)( \Lcal \phigl) d\sigma +  2 c_1\tau \int_{\partial (\Omega \setminus A_L)} \frac{\partial \Phi}{\partial \eta} ds+ \tau^2 \int_{A_1}( \Lcal \Phi )(\Delta \Phi) d\sigma,
\end{align*}
where $A_L$ is as before.  Estimating the integral over $A_L$ in exactly the same way as before, we find
\begin{align*}
\int_{A_L}  (\Delta \phigl)( \Lcal \phigl) d\sigma = O(\tau^{2(1+\alpha)} |\log \tau|^2). 
\end{align*} 
Next, we have $|\Lcal \Phi| \leq C$ and $|\Delta \Phi| \leq C$ on $A_1$, so $\tau^2 \int_{A_1}( \Lcal \Phi )(\Delta \Phi) d\sigma =O( \tau^2)$.  The proof is completed by working the preceding into Lemma \ref{LWomega} and estimating as before. 
\end{proof}

We are now ready to prove the main result of this paper:

\begin{proof}[Proof of Theorem \ref{TMain}]
By recent work of Mondino-Scharrer \cite[Theorem 1.2]{MondinoScharrer}, it suffices to show, for each $g\in \{0, 1, \dots\}$ and $v\in (0, 1)$, that there exists a smoothly embedded surface $S= S_{g, v} \subset \R^3$ of genus $g$ and isoperimetric ratio $v(S) = v$ and $\Wcal(S) < 8\pi$.  In fact, if for any small $v\in (0, 1)$ we can construct such a surface, then by using conformal invariance of the bending energy $\Wcal(S)$ and applying a family of M\"obius transformations to $S \subset \R^3$ which dilate out from a point $p_+\in S$ and contract in to some other point on $p_-\in S$, we thereby obtain a family of surfaces with the same value of the bending energy $\Wcal$ and all larger isoperimetric ratios $v<1$, since this family of surfaces converges smoothly away from  $p_-$ to a round sphere (with $v=1$). 

Now fix $m\in \N$.  For all small enough $\tau>0$, define $\Sigma_{m, \tau}$ as in \ref{dphi} to be a smoothly embedded genus $g=m-1$ surface in $\Sph^3$.  Let $Y: \Sph^3 \setminus \{(0,0, 0, 1)\} \rightarrow \R^3=\{x_4=0\}$ be stereographic projection from $(0, 0, 0, 1) \in \Sph^3 \subset \R^4$.  Since $\Sigma_{m, \tau}$ converges in the sense of varifolds to $2\Sph^2$ as $\tau \searrow 0$, since $Y$ is conformal, and $\left. Y \right|_{\Sph^2} = \mathrm{Id}_{\Sph^2}$, where we regard $\Sph^2=\R^3\cap\Sph^3$ as both a subset of $\R^3$ and of $\Sph^3$, it follows that $\lim_{\tau \searrow 0} v(Y(\Sigma_{m, \tau})) = 0$.  Thus to prove the theorem, by Remark \ref{Rconf} it suffices to show that $\Wcal(\Sigma_{m, \tau}) < 8\pi$ for all $\tau$ small enough.  To do this, we combine the estimates in \ref{Lbridge} and \ref{Lext}:
\begin{align*}
\Wcal(\Sigma_{m, \tau}) &= m \Wcal( \Gcat) + 2 \Wcal(\Gld)\\
&\leq 2 |D_L(\tau^\alpha)| + \frac{8\pi}{3}m\tau^2 |\log \tau| + 
2|\Sph^2| - 2|D_L(\tau^\alpha)|-\frac{11 \pi}{3}m  \tau^2 | \log \tau|\\
&= 2|\Sph^2| - m \pi  \tau^2 |\log \tau|.
\end{align*}
\end{proof}

\begin{remark}
\label{RL}
The method used here leads to the construction of other comparison surfaces with $\Wcal<8\pi$ and bridges centered on other symmetric configurations of points.  One such configuration consists of the vertices of a regular tetrahedron.
It would be interesting to know whether Canham minimizers for
 a given genus and small isoperimetric ratio have any particular symmetries. 
\end{remark}

\begin{remark}
While $\Phi[m]$ and $\Sigma_{m, \tau}$ have nontrivial symmetries for $m>1$, it is possible to construct comparison surfaces with $\Wcal <8\pi$ and $m>1$ bridges centered on configurations of points $L$ with trivial symmetry by generalizing the construction of $\Phi[1]$ and cutting off $G\circ \dbold_L$ to zero.  However, the resulting function would no longer satisfy the linearized Willmore equation, and we would not generally expect such comparison surfaces to be close to local minimizers for the Canham problem. 
\end{remark}

\appendix
\section{Mean curvature on a catenoidal bridge}
\label{SHbridge}

In this appendix, we estimate the mean curvature of a small catenoidal bridge in $\Sph^3$.  This was done in \cite[Example 2.15]{KapMcG2}, but we summarize the argument in order to keep the exposition self-contained.

Define a parametrization $E :(0, \pi) \times (-\pi, \pi ) \times (-\frac{\pi}{2},\frac{\pi}{2} ) \rightarrow \Sph^3$ by
\begin{align*}
E( \rr, \theta, \zz) = & (  \sin \rr \cos \theta \cos \zz,  \sin \rr \sin \theta \cos\zz, \cos \rr  \cos \zz, \sin \zz).
 \end{align*}
 We take $(\rr, \theta, \zz)$ as local coordinates for $\Sph^3$.  
The pullback metric is
 \begin{align*}
 E^* g = \cos^2 \zz\left( d \rr^2 + \sin^2 \rr d \theta^2\right) + d\zz^2,
\end{align*}
and the only nonvanishing Christoffel symbols are 
\begin{equation}
\label{Echr}
\begin{gathered}
\Gamma_{\rr \zz}^{\rr}  = \Gamma_{\zz \rr}^{\rr}= \Gamma_{\theta \zz}^{\theta} = \Gamma_{\zz \theta}^{\theta} =   - \tan \zz, \quad
\Gamma_{\theta \theta}^\rr = - \sin \rr \cos \rr, \\
\Gamma_{\rr\theta}^{\theta} = \Gamma_{\theta \rr}^{\theta} = \cot \rr, \quad
\Gamma_{\rr\rr}^{\zz} = \cos \zz \sin \zz, \quad
\Gamma_{\theta \theta}^{\zz} = \sin^2 \rr \sin \zz \cos \zz .
\end{gathered}
\end{equation} 
Define a map $X: [-\sunder, \sunder] \times (-\pi, \pi) \rightarrow \R^3$ by 
\begin{equation}
\label{EXcat}
\begin{aligned}
X(\sss, \vartheta) &= (\rr(\sss, \vartheta), \theta(\sss, \vartheta), \zz(\sss, \vartheta)) \\
&= (\tau \cosh \sss, \vartheta, \tau \sss),
\end{aligned}
\end{equation}
where $\sunder$ is defined by the equation $\tau \cosh \sunder = 9 \tau^\alpha$. 

Calculation shows that the pullback metric in $(\sss, \vartheta)$ coordinates is
\begin{equation}
\label{Ebrfermi}
X^* E^* g = \rr^2( 1- \tanh^2 \sss \sin^2 \zz) d\sss^2 + \cos^2 \zz \sin^2 \rr \, d\vartheta^2,
\end{equation}
where $\rr = \rr(\sss, \vartheta)$ and $\zz = \zz(\sss, \vartheta)$, 
and that
\begin{align*}
\nu =( \tanh \sss\,  \partial_{\zz} - \sec^2\zz \sech \sss \, \partial_{\rr})/\sqrt{1+\tan^2 \zz \sech^2 \sss}
\end{align*}
is a unit normal field along the image of $X$. 
We compute the second fundamental form $A$ of $X$ using the formula
$A = \left( X^{k}_{, \alpha \beta} + \Gamma_{lm}^{k} X^l_{, \alpha} X^{m}_{, \beta}\right) g_{kn}\nu^n dx^\alpha dx^\beta$, where $X$ is as in \ref{EXcat}, we have renamed the cylinder coordinates $(x^1, x^2): = (\sss, \vartheta)$, and Greek indices take the values $1$ and $2$ while Latin indices take the values $1,2,3$, corresponding to the coordinates $\rr, \theta, \zz$.  Using the preceding and the Christoffel symbols in \eqref{Echr}, we find
\begin{align*}
A &= \frac{ \left[ \tau^2 \tanh \sss \left( \tan \zz + \frac{1}{2}\sinh^2 \sss \sin 2\zz \right) - \tau\right] d\sss^2 +\frac{1}{2}(\sin 2\rr \sech \sss + \sin^2 \rr \sin 2\zz \tanh \sss) \, d\vartheta^2 }{ \sqrt{ 1+ \tan^2 \zz \sech^2 \sss}}\\
&= (1+ O(\zz^2)) \left( \tau( - d\sss^2 + d\vartheta^2) +  O(\rr^2 \zz)d\sss^2 + O(\rr^3+\rr^2 \zz) d\vartheta^2\right),
\end{align*}
where in the second equality we have estimated using that $ \sqrt{ 1+ \tan^2 \zz \sech^2 \sss} = 1+ O(\zz^2)$ and that $\frac{1}{2}\sin 2\rr \sech \sss = \tau + O(\rr^3)$.  Finally, using that $\rr^2 g^{\sss \sss} = 1+ O(\zz^2)$ and $\rr^2 g^{\vartheta \vartheta} = 1+O(\zz^2 +\rr^2)$ we estimate
\begin{align}
\label{EHcat}
\rr^2 H = O\left(\tau \zz^2 +\rr^2 |\zz| +  \tau \rr^2 \right).
\end{align}

\end{document}